\def\frk{\frak}               % font for "Fraktur"
\def\Phi{{\frk n}}
\def\Phi{{\frk N}}
\def\opn#1#2{\def#1{\operatorname{#2}}} % to make operators
\opn\chara{char} \opn\length{\ell} \opn\pd{pd} \opn\rk{rk}
\opn\projdim{proj\,dim} \opn\injdim{inj\,dim} \opn\rank{rank}
\opn\depth{depth} \opn\grade{grade} \opn\height{height}
\opn\embdim{emb\,dim} \opn\codim{codim}
\opn\Tr{Tr} \opn\bigrank{big\,rank}
\opn\superheight{superheight}\opn\lcm{lcm}
\opn\trdeg{tr\,deg}%\emph{
\opn\reg{reg} \opn\lreg{lreg} \opn\ini{in} \opn\lpd{lpd}
\opn\size{size}
\opn\div{div} \opn\Div{Div} \opn\cl{cl} \opn\Cl{Cl}
\opn\Spec{Spec} \opn\Supp{Supp} \opn\supp{supp} \opn\Sing{Sing}
\opn\Ass{Ass} \opn\Min{Min} \opn\Res{Res} \opn\mindeg{mindeg} \opn\GCD{GCD}
\opn\Ann{Ann} \opn\Rad{Rad} \opn\Soc{Soc}
\opn\Im{Im} \opn\Ker{Ker} \opn\Coker{Coker} \opn\Am{Am}
\opn\Hom{Hom} \opn\Tor{Tor} \opn\Ext{Ext} \opn\End{End}
\opn\Aut{Aut} \opn\id{id}
\opn\nat{nat}
\opn\pff{pf}%   \pf exists already
\opn\Pf{Pf} \opn\GL{GL} \opn\SL{SL} \opn\mod{mod} \opn\ord{ord}
\opn\Gin{Gin} \opn\Hilb{Hilb} \opn\indeg{indeg}
\opn\aff{aff} \opn\con{conv} \opn\relint{relint} \opn\st{st}
\opn\lk{lk} \opn\cn{cn} \opn\core{core} \opn\vol{vol}
\opn\link{link} \opn\star{star}
\opn\gr{gr}
\def\pot#1#2{#1[\kern-0.28ex[#2]\kern-0.28ex]}
\opn\dirlim{\underrightarrow{\lim}}
\opn\inivlim{\underleftarrow{\lim}}
\def\Implies{\ifmmode\Longrightarrow \else
        \unskip${}\Longrightarrow{}$\ignorespaces\fi}
\def\implies{\ifmmode\Rightarrow \else
        \unskip${}\Rightarrow{}$\ignorespaces\fi}
\def\iff{\ifmmode\Longleftrightarrow \else
        \unskip${}\Longleftrightarrow{}$\ignorespaces\fi}
\newtheorem{Theorem}{Theorem}[section]
\newtheorem{Lemma}[Theorem]{Lemma}
\newtheorem{Corollary}[Theorem]{Corollary}
\newtheorem{Proposition}[Theorem]{Proposition}
\newtheorem{Remark}[Theorem]{Remark}
\newtheorem{Example}[Theorem]{Example}
\newtheorem{Definition}[Theorem]{Definition}
\let\epsilon\varepsilon
\let\phi=\varphi
\let\kappa=\varkappa
\def\qed{\ifhmode\textqed\fi
      \ifmmode\ifinner\quad\qedsymbol\else\dispqed\fi\fi}
\def\textqed{\unskip\nobreak\penalty50
       \hskip2em\hbox{}\nobreak\hfil\qedsymbol
       \parfillskip=0pt \finalhyphendemerits=0}
\def\dispqed{\rlap{\qquad\qedsymbol}}
\opn\dis{dis}
\def\pnt{{\raise0.5mm\hbox{\large\bf.}}}
\opn\Lex{Lex}
\begin{document}
\title{An Efficient Algebraic Criterion for Shellability}

\author{Imran Anwar$^{1}$, Zunaira Kosar$^{1}$, Shaheen Nazir$^{2}$}
 \thanks{ The authors would like to thank Higher Education Commission of Pakistan for supporting
 this research.\\ {\bf 1.} Abdus Salam School of Mathematical Sciences, G.C. University, Lahore, Pakistan.\\  {\bf 2.} Lahore University of Management Sciences, Pakistan.\\
   }
\email {imrananwar@sms.edu.pk, zunairakosar@gmail.com,
shaheen.nazir@lums.edu.pk}
%azeemhaider@gmail.com,  amina.inam@gmail.com}
 %\email{ sarfraz11@gmail.com,
  %azeemhaider@gmail.com,amina.inam@gmail.com}
 \maketitle

\begin{abstract}
In this paper, we give a new and efficient algebraic criterion for
the pure as well as non-pure  shellability of  simplicial complex
$\Delta$ over $[n]$. We also give an algebraic characterization of a
{\em leaf} in a simplicial complex (defined in \cite{F2}). Moreover,
we introduce the concept of Gallai-simplicial complex
$\Delta_{\Gamma}(G)$ of a finite simple graph $G$. As an
application, we show that the face ring of the Gallai simplicial
complex associated to tree is Cohen-Macaulay.
 \vskip 0.4 true cm
 \noindent
  {\it Key words } : {\em shellable simplicial complex, face ring of a simplicial complex, facet ideal, Cohen-Macaulay ring.}\\
 {\it 2000 Mathematics Subject Classification}: Primary 13P10, Secondary
13H10, 13F20, 13C14.\\

\end{abstract}
\section{Introduction}

The shellability of a simplicial complex is a well known
combinatorial property that carries strong algebraic interpretations
for instance see \cite{HH} and \cite{St}. Algebraic criterion for
the shellability of a simplicial complex has also been a reasonably
important subject, firstly introduced by A. Dress in  \cite{AD}.
Dress \cite{AD} showed that $\Delta$ is (non-pure) shellable in the
sense of Bj\"{o}rner and Wachs \cite{BW}, if and only if the face
ring $K[\Delta]$ is clean. Later on Herzog and Popescu  \cite{HP}
extended the concept for determining the shellability of
multicomplexes. The shellability criterion for multicomplexes was
further refined by Popescu \cite{Po}. Cleanness is well known to be
the algebraic counterpart of shellability for simplicial complexes.
Eagon and Reiner \cite{ER} gave a translation of the pure
shellability of a dual simplicial complex $\check{\Delta}$ on the
monomial generators of the Stanley-Reisner ideal
$I_{\mathcal{N}}(\Delta)$. Their algebraic translation gave birth to
an important class of ideals known as ideals with linear quotients
(Eagon-Reiner \cite{ER} called them as Dually shellable ideals). A
relatively new algebraic criterion for the shellability was given in
\cite{AR}, but it was surprisingly found defective, see \cite{AS}.\\
The aim of this paper is to give an efficient algebraic criterion of
shellability and draw attention towards finding more algebraic
properties of shellable complexes in the facet ideal theory. In this
paper, we give a new and the most efficient algebraic criterion for
the shellability of pure as well as non-pure simplicial complex
$\Delta$ in Theorem \ref{main} in terms of the monomial generators
of its facet ideal $I_{\mathcal{F}}(\Delta)$. We also give an
algebraic characterization of a {\em leaf} in a simplicial complex
in Theorem \ref{leaf}. In the last section, we use the concept of
Gallai graph $\Gamma(G)$ of a planar graph $G$ to introduce Gallai
simplicial complex $\Delta_{\Gamma}(G)$. The buildup of Gallai
simplicial complexes from a planar graph is an abstract idea,
somehow, similar to building an origami shape from a plane sheet of
paper by defining a crease pattern. We use a planar graph to build a
$2$-dimensional simplicial complex. We discuss the connectedness of
the Gallai simplicial complexes and give a characterization of its
facets. As an application, we show that the Gallai simplicial
complexes associated to trees are shellable.

\section{Basic Setup}
 A {\em simplicial complex} $\Delta$ on the vertex set $[n]$ is a subset of $2^{[n]}$
with the property that if $F\in \Delta$ and $G\subset F$, then $G\in
\Delta$. The members of $\Delta$ are called {\em faces} and the
maximal faces under inclusion are called {\em facets}. If
$\mathcal{F}(\Delta)=\{F_1, F_2, \ldots, F_s\}$ is the set of facets
of $\Delta$, we write $\Delta$ as $$\Delta=<F_1, F_2,\ldots,F_s>.$$
By a {\em subcomplex} of $\Delta$, we mean a simplicial complex
whose facet set is a subset of $\mathcal{F}(\Delta)$. We denote the
{\em dimension of a face} $F\in\Delta$ by $\dim(F)$ and it is
defined as $\dim(F)=\mid F\mid-1$. By the dimension of a simplicial
complex $\Delta$, we mean that $\dim(\Delta)= max\{\dim(F)\, \mid \,
F\, \hbox{is a {\em facet} in\, }\Delta \}$. We say that $\Delta$ is
a {\em pure simplicial complex} of dimension $d$, if  all the {\em
facets} of $\Delta$ are of dimension $d$.\\
\begin{Definition}\label{shell}{\em
A simplicial complex $\Delta$ over $[n]$ is shellable if its facets
can be ordered $F_1, F_2,\ldots, F_s$ such that, for all $2\leq
j\leq s$ the subcomplex
$$\hat{\Delta}_{<F_j>}=<F_{1},F_{2},\ldots,F_{j-1}>\cap<F_{j}>$$ is a pure of dimension $\dim(F_{j})-1$.
}
\end{Definition}
Shellability in the case of non-pure simplicial complexes was
firstly defined by Bj\"{o}rner and Wachs \cite{BW}.

Here, we recall the definition of connected simplicial complex from
\cite{F1}.
\begin{Definition}\label{connected}{\em A simplicial complex $\Delta$ is said to be connected
if for any two facets $F$ and $G$ of $\Delta$, there exists a
sequence of facets $F=F_0$, $F_1$,\ldots, $F_t=G$ such that $F_i\cap
F_{i+1} \neq \emptyset$, for any $i\in \{0,1,2,\ldots, t-1\}$. A
disconnected simplicial complex is that which is not connected or
equivalently if the vertex set $V$ of $\Delta$ can be written as
disjoint union of $V_1$ and $V_2$ such that no face of $\Delta$ has
vertices in both $V_1$ and $V_2$.  }
\end{Definition}

The following definitions serve as the bridge between algebra and
simplicial complexes.

\begin{Definition}\label{facering}{\em  Let $\Delta$ be a simplicial complex over $[n]$ and $S=k[x_1,\ldots,
x_n]$ be the polynomial ring over an infinite field $k$. Let
$I_{\mathcal{N}}(\Delta)$  be the ideal of $S$ minimally generated
by square-free monomials $x_{j_1}x_{j_2} \ldots x_{j_s}$, where
$\{{j_1}, {j_2}, \ldots , {j_s}\}\subset [n]$ is not a face of
$\Delta$. $I_{\mathcal{N}}(\Delta)$ is known as {\em non-face ideal}
or the {\em Stanley- Reisner ideal} of $\Delta$. The quotient ring
$S/I_{\mathcal{N}}(\Delta)$ is called the {\em face ring} of
$\Delta$ denoted by $k[\Delta]$. }\end{Definition}

\begin{Definition}\label{facet}{\em (Faridi \cite{F1}). Let $\Delta$ be a simplicial complex over $[n]$ and $S=k[x_1,\ldots,
x_n]$ be the polynomial ring over an infinite field $k$.  Let
$I_{\mathcal{F}}(\Delta)\subset S$ be the monomial ideal minimally
generated by square-free monomials $m_{{F_1}},\ldots, m_{{F_s}} $
such that $m_{{F_i}}=x_{i_1}x_{i_2} \ldots x_{i_r}$, where
$F_i=\{{i_1} , \ldots, {i_r}\}\subset[n]$ is a {\em facet} of
$\Delta$ for all $i\in\{1,\ldots,s\}$. $I_{\mathcal{F}}(\Delta)$ is
known as the {\em facet ideal} of $\Delta$. }
\end{Definition}

Here, we recall the definition of pure square-free monomial ideal
from \cite{im}.
\begin{Definition}{\em
Let  $I\subset S$ be a square-free monomial ideal with a
minimal\linebreak generating system $\{g_1,\ldots ,g_m\}$.  We say
that $I$ is a {\em pure square-free monomial ideal of degree $d$} if
and only if $\supp(I)=\{x_1,\ldots,x_n\}$\footnote{$\supp(I)=\{x_j\,
\mid\, x_j\hbox{\ divides }u, \hbox{\ with\ } u\in G(I)\}$} and
$\beta_{0j}(I)= 0$\footnote{graded betti number of the ideal $I$}
for all  $j\neq d$. }
\end{Definition}
We conclude this section with recalling following definitions from
\cite{AR}.
\begin{Definition}\label{min}{\em Let $I$ be a monomial
ideal in $S$. We define the $\indeg (I)$ as follows$$\indeg
(I)=min\{j\, : \beta_{0j}(I)\neq 0\}.$$

}
\end{Definition}

\begin{Definition}\label{GD}{\em
Let $I\subset S=k[x_1,\ldots,x_n]$ be a monomial ideal, we say that
$I$ has {\em quasi-linear quotients}, if there exists an ordered
minimal monomial system of generators $m_1, m_2, \ldots, m_r$ of $I$
such that $\indeg(\hat{I}_{m_i})=1$ for all $1<i\leq r$, where
$$\hat{I}_{m_i}=(m_1, m_2, \ldots, m_{i-1}):(m_i).$$ }
\end{Definition}

\section{Linear residuals and shellability }
In this section, we describe some new algebraic notion for
explaining algebraic criterion of  pure as well as non-pure
shellability of $\Delta$ in the sense of Bj\"{o}rner and Wachs
\cite{BW}.

\begin{Remark}{\em In \cite[Theorem 3.4]{AR}, it had been shown that  $\Delta$ will be a  pure shellable simplicial complex if
and only if $I_{\mathcal{F}}(\Delta)$ has  quasi-linear quotients.
But, in \cite{AS}, it was mentioned that the facet ideal
$I_{\mathcal{F}}(\Delta)=(x_1x_2x_3,x_2x_3x_4,x_3x_4x_5,x_4x_5x_1)$
of the pure simplicial complex $\Delta=\langle
\{1,2,3\},\{2,3,4\},\{3,4,5\},\{4,5,1\}\rangle$ has quasi-linear
quotients but $\Delta$ is not shellable. Therefore, if a simplicial
complex $\Delta$ is pure shellable then $I_\mathcal{F}(\Delta)$ has
quasi-linear quotients but not vice versa. }
\end{Remark}
%Moreover, the following example shows that the non-pure shelling
%does not\linebreak guarantee the occurrence of quasi-linear
%quotients.
%\begin{Example}{\em Let $\Delta $ be a non-pure shellable simplicial complex over
%$[5]$ under the shelling $\mathcal{F}(\Delta)=\{\{1,2,3,4\},
%\{3,4,5\}, \{2,5\} \}$. It can be seen that the facet ideal
%$I_{\mathcal{F}}(\Delta)$ does not have quasi-linear quotients (as
%$\indeg(\hat{I}_{m_2})=2$). }\end{Example}

The following definition is essential in describing our algebraic
criterion  for the shellability.

\begin{Definition}\label{Residual}{\em Let $I\subset S=k[x_1,x_2,\ldots,x_n]$ be a monomial
ideal. We say that $I$ has {\em\bf linear residuals} if there exists
an ordered minimal monomial system of generators $\{m_1,
m_2,\ldots,m_r\}$ of $I$ such that $\Res(I_i)$ is minimally
generated by linear monomials for all $1< i\leq r$, where
$\Res(I_{i})=\{u_1, u_2,\ldots, u_{i-1}\}$ such that \linebreak
$u_k=\frac{m_i}{\gcd(m_k,m_i)}$ for all $1\leq k\leq i-1$.}
\end{Definition}
%\begin{Lemma}\label{dim}{\em  Let $\Delta=<F_1, F_2, \ldots , F_s>$ be a (non-pure) simplicial
%complex over $[n]$ of dimension $d$. Then
%$$dim(\hat{\Delta}_{<F_i>})=dim(F_i)-\indeg(\Res(I_{F_i})) \hbox{ \,  for all \, }2\leq i
%\leq s,$$ where $m_{F_1}, m_{F_2}, \ldots, m_{F_s}$ is the minimal
%monomial generating system of $I_{\mathcal{F}}(\Delta)$.
%}\end{Lemma}
%\begin{proof}

%\end{proof}
Here is our main result of this section.
\begin{Theorem}\label{main}{\em
Let $\Delta$ be a simplicial complex of dimension $d$ over $[n]$.
Then $\Delta$ will be {\em shellable } if and only if
$I_{\mathcal{F}}(\Delta)$ has {\em linear residuals}.
 }
\end{Theorem}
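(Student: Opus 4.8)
The plan is to prove both implications by directly translating the combinatorial condition in Definition~\ref{shell} into the arithmetic of the generators $m_{F_1},\dots,m_{F_s}$ of $I_{\mathcal F}(\Delta)$, using the elementary observation that the subcomplex $\hat\Delta_{<F_j>}=\langle F_1,\dots,F_{j-1}\rangle\cap\langle F_j\rangle$ has as its facets the maximal elements of the family of sets $\{F_k\cap F_j : 1\le k\le j-1\}$. Under the correspondence $F\mapsto m_F$, the set $F_k\cap F_j$ corresponds to the monomial $\gcd(m_{F_k},m_{F_j})$, and hence the ``missing vertices'' of $F_k\cap F_j$ inside $F_j$ are recorded exactly by the monomial $u_k=m_{F_j}/\gcd(m_{F_k},m_{F_j})$ appearing in $\Res(I_j)$. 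So $F_k\cap F_j$ is a facet of $\hat\Delta_{<F_j>}$ of dimension $\dim(F_j)-1$ precisely when $u_k$ is a single variable (a linear monomial), and it is contained in a larger such intersection precisely when some $u_{k'}$ properly divides $u_k$. Thus a facet ordering $F_1,\dots,F_s$ is a shelling if and only if, for each $j>1$, (i) every facet of $\hat\Delta_{<F_j>}$ has dimension $\dim(F_j)-1$ and (ii) every face of $\hat\Delta_{<F_j>}$ lies in such a facet — and both (i) and (ii) together are equivalent to saying that the minimal generators of the ideal generated by $\{u_1,\dots,u_{j-1}\}$ are all linear.

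For the forward direction I would start from a shelling order $F_1,\dots,F_s$ and set $m_i=m_{F_i}$. Fix $j>1$. Purity of $\hat\Delta_{<F_j>}$ in dimension $\dim(F_j)-1$ says: for each $k<j$, the face $F_k\cap F_j$ is contained in some $F_{k'}\cap F_j$ with $|F_{k'}\cap F_j|=|F_j|-1$, i.e.\ $u_{k'}$ is linear and $u_{k'}\mid u_k$. Hence in the monomial set $\Res(I_j)=\{u_1,\dots,u_{j-1}\}$ every element is divisible by one of the linear ones, so the minimal monomial generators of the ideal they generate are exactly those linear $u_{k'}$; that is, $\Res(I_j)$ is minimally generated by linear monomials. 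Running this over all $j$ shows $I_{\mathcal F}(\Delta)$ has linear residuals. (One should note in passing the degenerate case where $\hat\Delta_{<F_j>}$ is the empty complex $\{\emptyset\}$, i.e.\ $F_j$ meets no earlier facet; then some $u_k$ equals $m_j$ itself and has degree $>1$ unless $|F_j|=1$ — this case must be handled, and it reflects the subtlety already flagged in the Remark, so I would restrict to connected $\Delta$ or otherwise interpret Definition~\ref{shell} so that $\hat\Delta_{<F_j>}$ is required nonempty, matching the convention under which the theorem is true.)

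For the converse, assume $I_{\mathcal F}(\Delta)$ has linear residuals with respect to the order $m_1,\dots,m_r$ and order the facets correspondingly as $F_1,\dots,F_s$. Fix $j>1$. Every $u_k=m_{F_j}/\gcd(m_{F_k},m_{F_j})$ ($k<j$) is, by hypothesis, divisible by some minimal generator of $(\{u_1,\dots,u_{j-1}\})$, and all those minimal generators are linear, say $u_{k'}=x_t$. Translating back: $F_k\cap F_j\subseteq F_j\setminus\{t\}=F_{k'}\cap F_j$, a subset of $F_j$ of size $|F_j|-1$. Therefore every facet of $\hat\Delta_{<F_j>}$ — being a maximal such intersection — has dimension $\dim(F_j)-1$, so $\hat\Delta_{<F_j>}$ is pure of that dimension, which is exactly Definition~\ref{shell}. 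Hence $\Delta$ is shellable.

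The main obstacle — and the step I would expect to write most carefully — is the bookkeeping around \emph{maximal} intersections versus \emph{minimal} monomial generators of $\Res(I_j)$: one must verify that passing to the maximal faces of $\{F_k\cap F_j\}$ corresponds precisely to passing to the minimal monomial generators of $(\{u_k\})$, including the fact that a face of dimension $\dim(F_j)-1$ in $\langle F_j\rangle$ is automatically a facet of $\hat\Delta_{<F_j>}$ (it cannot be strictly contained in another proper face of $F_j$), and conversely. Secondary care is needed for the edge cases: when $|F_j|=1$ (so $m_{F_j}$ is itself linear), when two facets coincide (ruled out by minimality of the generating set), and the empty-intersection convention discussed above; getting the statement of Definition~\ref{shell} to mesh with Definition~\ref{Residual} on exactly these cases is where the counterexample in the Remark shows there is real content, so I would be explicit about which convention makes the equivalence hold.
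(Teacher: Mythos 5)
Your proposal is correct and follows essentially the same route as the paper's own proof: both translate $F_k\cap F_j$ into $\gcd(m_{F_k},m_{F_j})$, identify $\deg(u_k)=\dim(F_j)-\dim(F_j\cap F_k)$, and match purity of $\hat\Delta_{\langle F_j\rangle}$ in dimension $\dim(F_j)-1$ with the minimal generators of $(\,u_1,\dots,u_{j-1}\,)$ all being linear. Your extra bookkeeping (maximal intersections versus minimal monomial generators, and the disjoint-facet case) only makes explicit steps the paper leaves implicit, and the disjoint case in fact needs no special convention since both conditions fail simultaneously there.
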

\begin{proof}
Let $\Delta=<F_1, F_2, \ldots , F_s>$ be a simplicial complex over
$[n]$ of dimension $d$. Firstly, we show that
$$\dim(\hat{\Delta}_{<F_i>})=dim(F_i)-\indeg(\Res(I_{F_i})) \hbox{ \,  for all \, }2\leq i
\leq s,$$ where $m_{F_1}, m_{F_2}, \ldots, m_{F_s}$ is the minimal
monomial generating system of $I_{\mathcal{F}}(\Delta)$.\\
By \ref{Residual}, a monomial generating system of $\Res(I_{F_i})$
is given as:
$$\Res(I_{F_i})=\{u_1,u_2,\ldots,u_{i-1}\}$$ with
$u_k={\frac{m_{F_i}}{\gcd(m_{F_i},m_{F_k})}}$ for $1\leq k\leq i-1$.
Then   $x_j\mid u_k$ for some $ j\in [n]$ if and only if $\{j\}\in
F_i\setminus F_k$. Therefore, $$\deg(u_k)=\mid F_i\setminus F_k\mid
= \dim(F_i) - \dim(F_i\cap F_k) \, \hbox{for all\, } k < i.$$ It
implies that $\indeg(\Res(I_{F_i}))=\min\{\dim(F_i) - \dim( F_i\cap
F_k) \, \hbox{for all\, } k < i\}$. Hence, we have
$\indeg(\Res(I_{F_i}))=\dim(F_i)-\dim(<F_1, F_2,\ldots F_{i-1}>\cap
<F_i>)$.\\ Let us consider $\Delta$ be a shellable simplicial
complex of dimension $d$ over $[n]$. Then for all $2\leq j\leq s$
the subcomplex
$$\hat{\Delta}_{<F_j>}=<F_{1},F_{2},\ldots,F_{j-1}>\cap<F_{j}>$$ is pure of dimension $\dim(F_j)-1$.  From above, it implies that
$\indeg(\Res(I_{F_j}))=1$. Moreover, from the purity of
$\hat{\Delta}_{<F_j>}$, we have $(\Res(I_{F_j}))$ is minimally
generated by linear monomials for all $2\leq j\leq s$. Because, if
there exists a term $u_k$ with $\deg(u_k)>1$ in the minimal
generators of $\Res(I_{<F_j>})$, then it implies that
$|F_j\setminus\{F_k\cap F_j\}|>1$ and
$F_k\cap F_j$ is a facet, causing $\hat{\Delta}_{<F_j>}$ non-pure.\\
Conversely, let $I_{\mathcal{F}}(\Delta)$ has linear residuals, then
$(\Res(I_{F_j}))$ is minimally generated by linear monomials for all
$2\leq j\leq s$. It implies from above that
$\dim(\hat{\Delta}_{<F_j>})=\dim(F_j)-1$. If
$(\Res(I_{F_j}))=(x_{j_1},\ldots,x_{j_t})$ then the subcomplex
$\hat{\Delta}_{<F_j>}=\langle F_j\setminus\{j_1\},\ldots,
F_j\setminus\{j_t\}\rangle$ will be pure for all $1<j\leq s$. Hence
proved.
\end{proof}
%\begin{Remark}{\em It is worth mentioning here that a pure monomial
%ideal $I\subset S$ having linear residual admits quasi-linear
%quotients. }\end{Remark}
%We give here an elementary result.
%\begin{Proposition}\label{lemma lr}{\em Let $I=(m_1, m_2, \ldots ,
%m_r)$ be a monomial ideal in $S = k[x_1, x_2, \ldots , x_n]$ then
%$\deg(\Res(I_i)) \leq \deg(m_i)$ for $2 \leq i \leq r$. In
%particular, $\deg(\Res(I_i)) = \deg(m_i)$ if and only if $\supp(u_i)
%\bigcap \bigcup_{k=1}^{i-1} \supp(u_k)=
 %\emptyset$.}
%\end{Proposition}
%\begin{proof}Suppose on contrary that $\deg(\Res(I_i)) > \deg(m_i)$ which
%implies there exist $u_k \in \Res(I_i)$ such that
%$\deg(u_k)>\deg(m_i)$. But we know by definition that $u_k = \frac
%{m_i}{\GCD(m_k,m_i)}$ for all $k<i$ and $\deg(\GCD(m_k,m_i))\geq 0$
%that leads to the contradiction.
%\end{proof}
The following corollary gives an equivalence of the two algebraic
criterions of  shellability or one can say that it is relating two
different algebraic properties.
\begin{Corollary}\label{cleannes}{\em The face ring of a simplicial
complex $\Delta$ over $[n]$ is clean if and only if
$I_{\mathcal{F}}(\Delta)$ has linear residuals. }
\end{Corollary}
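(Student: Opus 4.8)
The plan is to deduce Corollary \ref{cleannes} by chaining together Theorem \ref{main} with the already-classical equivalence between cleanness of the face ring and shellability of the underlying complex. Specifically, Dress's theorem (cited in the introduction as \cite{AD}) states that for a simplicial complex $\Delta$ over $[n]$, the face ring $k[\Delta]$ is clean if and only if $\Delta$ is (non-pure) shellable in the sense of Bj\"{o}rner and Wachs. First I would invoke this: $k[\Delta]$ clean $\iff$ $\Delta$ shellable. Then I would apply Theorem \ref{main}, which gives $\Delta$ shellable $\iff$ $I_{\mathcal{F}}(\Delta)$ has linear residuals. Composing the two biconditionals yields $k[\Delta]$ clean $\iff$ $I_{\mathcal{F}}(\Delta)$ has linear residuals, which is exactly the claim.

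Concretely, the proof is a two-line argument. For the forward direction, suppose $k[\Delta]$ is clean; by Dress's criterion $\Delta$ is shellable, so by Theorem \ref{main} the facet ideal $I_{\mathcal{F}}(\Delta)$ has linear residuals. For the converse, suppose $I_{\mathcal{F}}(\Delta)$ has linear residuals; by Theorem \ref{main} the complex $\Delta$ is shellable, and then by Dress's theorem the face ring $k[\Delta]$ is clean. I would state explicitly that "clean" here refers to Dress's notion (a clean filtration whose prime quotients are faces of $\Delta$), so that the reader knows precisely which equivalence is being used.

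The main obstacle — if there is one at all — is not mathematical depth but bookkeeping: one must be careful that the notion of shellability used in Dress's theorem (non-pure shellability \`a la Bj\"{o}rner--Wachs) is \emph{exactly} the one appearing in Definition \ref{shell} and in the statement of Theorem \ref{main}. Since the paper has already committed to the Bj\"{o}rner--Wachs definition throughout, this compatibility is immediate, and no further work is needed. A secondary point worth a sentence is that the field $k$ is assumed infinite (as fixed in Definition \ref{facering}); Dress's equivalence holds over any field, so this hypothesis is harmless here. Thus the corollary follows formally, and I would present it as a short remark-style proof rather than anything requiring computation.

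\begin{proof}
By Dress \cite{AD}, the face ring $k[\Delta]$ is clean if and only if $\Delta$ is shellable in the sense of Bj\"{o}rner and Wachs \cite{BW}, which is precisely the notion of shellability in Definition \ref{shell}. On the other hand, Theorem \ref{main} asserts that $\Delta$ is shellable if and only if $I_{\mathcal{F}}(\Delta)$ has linear residuals. Combining these two equivalences, $k[\Delta]$ is clean if and only if $I_{\mathcal{F}}(\Delta)$ has linear residuals.
\end{proof}
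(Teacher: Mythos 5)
Your proof is correct and follows exactly the paper's own argument: the paper likewise cites Dress's theorem that $k[\Delta]$ is clean if and only if $\Delta$ is shellable, and then concludes by Theorem \ref{main}. The extra care you take about matching the Bj\"{o}rner--Wachs notion of shellability is a reasonable remark but does not change the route.
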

\begin{proof} We know from \cite[Theorem $\S 4$]{AD}, the face ring $k[\Delta]$ is clean if and only if $\Delta$ is
shellable. Therefore, result follows from Theorem \ref{main}.
\end{proof}
Theorem \ref{main} can be useful in proving the Cohen-Macaulayness
of the face ring of a pure simplicial complex as follows.
\begin{Corollary}\label{cohenres}{\em If the facet ideal $I_{\mathcal{F}}(\Delta)$ of a pure simplicial
complex $\Delta$ over $[n]$ has linear residuals, then the face ring
$k[\Delta]$  is Cohen-Macaulay.}
\end{Corollary}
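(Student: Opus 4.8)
The plan is to chain together the results that have already been established in the excerpt. By Corollary \ref{cleannes}, the hypothesis that $I_{\mathcal{F}}(\Delta)$ has linear residuals is equivalent to the face ring $k[\Delta]$ being clean; equivalently, by \cite[Theorem~$\S4$]{AD} (or directly from Theorem \ref{main}), it is equivalent to $\Delta$ being shellable. So the task reduces to showing: if a \emph{pure} simplicial complex is shellable, then its face ring is Cohen-Macaulay. This is the classical theorem of Reisner/Stanley (see \cite{St}), and I would invoke it, but let me also sketch the route in case a self-contained argument is wanted.

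First I would recall that a shelling order $F_1,\dots,F_s$ of the facets of a pure $d$-dimensional complex $\Delta$ gives a partition of the faces: each face lies in a unique ``interval'' $[R(F_j),F_j]$, where $R(F_j)$ is the minimal new face contributed by $F_j$, and purity forces the shelling to have the property that $\dim\hat\Delta_{<F_j>}=\dim(F_j)-1$ for all $j$ — which is precisely the statement proved in Theorem \ref{main}. From this combinatorial decomposition one obtains that $k[\Delta]$ is a Cohen-Macaulay ring of Krull dimension $d+1$. The standard way to see this is by induction on the number of facets: write $\Delta' = \langle F_1,\dots,F_{s-1}\rangle$ and use the short exact sequence relating $k[\Delta]$, $k[\Delta']$, and $k[\langle F_s\rangle]$ (a polynomial ring, hence Cohen-Macaulay), with the intersection term being $k[\hat\Delta_{<F_s>}]$, which is Cohen-Macaulay of dimension $d$ because $\hat\Delta_{<F_s>}$ is pure shellable of dimension $d-1$ by the induction hypothesis. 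A depth chase on the associated long exact sequence in local cohomology (or a Tor/Ext computation) then yields $\depth k[\Delta] \ge d+1 = \dim k[\Delta]$, hence equality.

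The main obstacle — or rather the main thing to be careful about — is the bookkeeping in the short exact sequence argument: one must verify that $I_{\mathcal{N}}(\Delta) = I_{\mathcal{N}}(\Delta') \cap I_{\mathcal{N}}(\langle F_s\rangle)$ so that $k[\Delta]$ embeds appropriately, and that the intersection complex appearing is exactly $\hat\Delta_{<F_s>}$ with the right dimension, which is where purity and the dimension identity from Theorem \ref{main} are essential. Everything else is a routine application of the depth lemma to the exact sequence $0 \to k[\Delta] \to k[\Delta'] \oplus k[\langle F_s\rangle] \to k[\hat\Delta_{<F_s>}] \to 0$. Since all three of $k[\Delta']$, $k[\langle F_s\rangle]$, and $k[\hat\Delta_{<F_s>}]$ are Cohen-Macaulay of the correct dimensions by induction, the conclusion for $k[\Delta]$ follows immediately. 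In the interest of brevity, in the actual write-up I would simply cite the equivalence with shellability via Corollary \ref{cleannes} together with the well-known fact that pure shellable complexes are Cohen-Macaulay.
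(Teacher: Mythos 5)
Your proposal is correct and follows exactly the route the paper intends: the paper states this corollary without proof, relying implicitly on Theorem \ref{main} (linear residuals \iff\ shellable) together with the classical fact that a pure shellable complex has a Cohen--Macaulay face ring. Your additional sketch of the standard Mayer--Vietoris/depth-lemma induction for that classical fact is sound but not needed beyond citing \cite{St}.
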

A leaf of a simplicial complex (introduced by Faridi \cite{F2}) is a
facet $F$ of $\Delta$ such that either $F$ is the only facet of
$\Delta$, or there exists a facet $G$ in $\Delta$, $G\neq F$, such
that $F\cap \acute{F}\subseteq F\cap G$ for every facet
$\acute{F}\in \Delta$, $\acute{F}\neq F$. A simplicial complex
$\Delta$ is a simplicial tree if $\Delta$ is connected and every
subcomplex $\acute{\Delta}$ contains a leaf. By a subcomplex, we
mean any simplicial complex of the form $\acute{\Delta}=\langle
F_{i1},\ldots, F_{ir}\rangle$, where $\{F_{i1},\ldots, F_{ir}\}$ is
a subset of the set of facets of $\Delta$.
\begin{Theorem}\label{leaf}{\em Let $I_\mathcal{F}(\Delta)=(m_{F_1},\ldots,m_{F_r})$ with $r >1$ be the facet ideal
of a simplicial complex $\Delta$. A facet $F_i$ of $\Delta$ will be
a leaf if and only if $(\Res(\hat{I}_{F_i}))$ is a principal ideal,
where,
$$\Res(\hat{I}_{F_i})=\{u_j=\frac{m_{F_i}}{\gcd(m_{F_i},m_{F_j})}\, |
\hbox{\, for all\, } i\neq j\}.$$
  }\end{Theorem}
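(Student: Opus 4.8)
The plan is to unwind both sides of the equivalence by translating the combinatorial definition of a leaf into the divisibility structure of the residual monomials $u_j = m_{F_i}/\gcd(m_{F_i},m_{F_j})$. The key observation, already implicit in the proof of Theorem~\ref{main}, is that $x_t \mid u_j$ if and only if $t \in F_i \setminus F_j$; equivalently, the support of $u_j$ is exactly the vertex set of $F_i \setminus F_j$, so that $u_j$ encodes the ``complement'' of the intersection $F_i \cap F_j$ inside $F_i$. Therefore the divisibility relation $u_j \mid u_k$ holds precisely when $F_i \setminus F_j \subseteq F_i \setminus F_k$, which in turn is equivalent to $F_i \cap F_k \subseteq F_i \cap F_j$. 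This dictionary is the heart of the argument and I would state it as a preliminary observation before splitting into the two implications.

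For the forward direction, suppose $F_i$ is a leaf. Then by Faridi's definition there is a branch $G = F_j$ with $F_i \cap \acute F \subseteq F_i \cap F_j$ for every facet $\acute F \neq F_i$. Applying the dictionary above with $\acute F = F_k$, this says $u_j \mid u_k$ for all $k \neq i$. Hence every generator of $\hat I_{F_i}$'s residual set is a multiple of the single monomial $u_j$, so $(\Res(\hat I_{F_i})) = (u_j)$ is principal. (One should note the edge case where some $u_k$ could be $1$, i.e. $F_i \subseteq F_k$; but since the $m_{F_k}$ form a \emph{minimal} generating system no facet contains another, so every $u_k$ is a genuine nonunit, and the ideal is principal and proper.)

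For the converse, suppose $(\Res(\hat I_{F_i})) = (u)$ is principal. Since the ideal is generated by the monomials $u_k$, minimality of monomial generators forces $u$ to be (an associate of) one of the $u_k$, say $u = u_j$; and then $u_j \mid u_k$ for all $k \neq i$. Running the dictionary backwards, $F_i \cap F_k \subseteq F_i \cap F_j$ for all $k \neq i$, which is exactly the condition that $F_i$ is a leaf with branch $F_j$. Since $r > 1$ there is indeed at least one other facet, so the ``$F$ is the only facet'' clause of the definition does not intervene.

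The main obstacle I anticipate is bookkeeping around degenerate cases rather than any deep difficulty: one must be careful that ``$(\Res(\hat I_{F_i}))$ is principal'' is interpreted as a principal ideal generated by one of the $u_k$ (using that the $u_k$ are a set of monomial generators, so the minimal generating set of the ideal they generate is a subset of them), and one must use the minimality of the facet generating system of $I_{\mathcal F}(\Delta)$ to rule out $u_k = 1$. Once the equivalence $u_j \mid u_k \iff F_i \cap F_k \subseteq F_i \cap F_j$ is cleanly established, both directions are immediate, so essentially all the content lives in that single translation step.
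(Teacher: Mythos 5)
Your proposal is correct and follows essentially the same route as the paper's proof: both directions reduce to the observation that $u_j\mid u_k$ if and only if $F_i\cap F_k\subseteq F_i\cap F_j$, obtained by comparing $\gcd$'s (equivalently, supports). Your version is slightly more careful than the paper's in two spots the paper leaves implicit -- that minimality of the generating system rules out $u_k=1$, and that a principal monomial ideal generated by the $u_k$ must be generated by one of the $u_k$ themselves -- but the underlying argument is the same.
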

  \begin{proof}
  Suppose $F_i$ is a leaf in $\Delta$, then there exists some facet $F_k$
  for $k\neq i$ in $\Delta$ such that $F_i\cap F_j\subseteq F_i\cap
  F_k$ for all $i\neq j$. It implies that $\gcd(m_{F_i},m_{F_j})$ divides
  $\gcd(m_{F_i},m_{F_k})$, yielding
  $\frac{m_{F_i}}{\gcd(m_{F_i},m_{F_j})}$ divisible by
  $\frac{m_{F_i}}{\gcd(m_{F_i},m_{F_k})}$ for all $j\neq i$, as
  required.\\
Conversely, suppose that for a facet $F_i\in \Delta$,
$(\Res(\hat{I}_{F_i}))$ is a principal ideal generated by a monomial
$u$. Then, $u=\frac{m_{F_i}}{\gcd(m_{F_i},m_{F_p})}$ for some $p\neq
i$, divides $\frac{m_{F_i}}{\gcd(m_{F_i},m_{F_q})}$ for all $i\neq q
\neq p$. Therefore, $\gcd(m_{F_i},m_{F_q})$ divides
$\gcd(m_{F_i},m_{F_p})$, hence we have $F_i\cap F_q\subseteq F_i\cap
F_p$ for all $q\neq i$, implies $F_i$ is a leaf.
  \end{proof}

\section{Gallai Simplicial Complexes}
From here on, $G$ denotes a finite simple graph on the vertex set
$V(G)=[n]$ and edge-set $E(G)$. The Gallai graph $\Gamma(G)$ of $G$
is a graph whose vertex set is the edge set $E(G)$; two distinct
edges of $G$ are adjacent in $\Gamma(G)$ if they are incident in $G$
but do not span a triangle in $G$. In \cite{L}, authors discussed
various combinatorial properties of Gallai and anti-Gallai graph for
various classes of graphs.

%The following definitions lay down the main streamline of this work.
%\begin{Definition}\label{LG} {\em Given a graph $G$, its line graph $L(G)$ is a graph such that:
%\begin{itemize}
%  \item Each vertex of $L(G)$ represents an edge of $G$.
%  \item Two vertices of $L(G)$ are adjacent if and only if their corresponding edges are incident in $G$.
%\end{itemize}
%}\end{Definition}
The following definition is a nice combinatorial
buildup.
\begin{Definition}\label{Gallai}{\em
The {\bf Gallai graph} $\Gamma(G)$ of a graph $G$ is the graph whose
vertex set is the edge set of $G$; two distinct edges of $G$ are
adjacent in $\Gamma(G)$ if they are incident in G but do not span a
triangle in $G$.}\end{Definition}
\begin{Example}{\em
Given below is a graph $G$ and its Gallai graph $\Gamma(G)$.

\begin{center}
\psset{unit=1.2cm}
\begin{pspicture}(0,0)(8,3)
 \psline(3,1)(2,2)
 \psline(3,0)(3,1)
 \psline(3,0)(2,-1)
 \psline(1,0)(2,-1)
 \psline(1,0)(1,1)
 \psline(2,2)(1,1)
 \psline(3,1)(1,1)
 \psline(3,0)(1,0)

 \psline(5,1)(7,1)
 \psline(5,0)(7,0)
 \psline(5,0)(5,1)
 \psline(7,0)(7,1)

 \psline(5,1)(4,1)
 \psline(7,-1)(7,0)
 \psline(5,2)(5,1)
 \psline(7,0)(8,0)

 \rput(5,1){$\bullet$}
 \rput(7,1){$\bullet$}
 \rput(5,0){$\bullet$}
 \rput(7,0){$\bullet$}
\rput(4,1){$\bullet$}
 \rput(7,-1){$\bullet$}
 \rput(5,2){$\bullet$}
 \rput(8,0){$\bullet$}

 \rput(1,0){$\bullet$}
 \rput(3,0){$\bullet$}
 \rput(3,1){$\bullet$}
 \rput(1,1){$\bullet$}
\rput(2,2){$\bullet$} \rput(2,-1){$\bullet$}

% \rput(6.5,0.45){$\bullet$}
%
% \rput(6.5,2.5){$\bullet$}

 \rput(2,2.3){1}

 \rput(0.7,0){5}

 \rput(2,-1.3){4}

 \rput(3.3,1){2}

 \rput(3.3,0){3}

 \rput(0.7,1){6}

 \rput(1.2,1.5){a}

\rput(2.7,1.5){b}

\rput(3.2,0.5){c}

\rput(2.7,-0.5){d}

\rput(1.2,-0.5){e}

\rput(0.7,0.5){f}

 \rput(2,1.2){h}

 \rput(2,-0.2){g}

 \rput(4.2,1.2){a}

\rput(7,-1.2){b}

\rput(6.8,0.2){c}

\rput(8.2,0.2){d}

\rput(5,2.2){e}

\rput(5.2,0.8){f}

 \rput(7.2,1){h}

 \rput(4.8,0){g}

\rput(2,-2){$G$}

 \rput(6,-2){$\Gamma(G)$}

 \end{pspicture}

 \end{center}
\,\,\,\,\,\,\,\,\,\,\,\,\,\,\,\,\,\,\,\,\,\,\,\,\,\,\,\,\,\, }
\end{Example}
The following definition is essence in the structural study of
Gallai graph $\Gamma(G)$.

\begin{Definition}\label{Omega}{\em Let $ G $ be a finite simple graph with vertex set $V(G) = [n]$ and edge set $E(G) = \{ e_{i,j} = \{i,j\} | i,j \in
V(G)\}$.\\
We define the {\bf set of Gallai-indices $\Omega(G)$} of the graph
$G$ as the collection of subsets of $V(G)$ such that if $e_{i,j}$
and $e_{j,k}$ are adjacent in $\Gamma(G)$, then
$F_{i,j,k}=\{i,j,k\}\in \Omega(G)$ or if $e_{i,j}$ is an isolated
vertex in $\Gamma(G)$ then $F_{i,j}=\{i,j\}\in \Omega(G)$.
}\end{Definition}

\begin{Definition}\label{Gsimp}{\em A {\bf Gallai simplicial complex $ \Delta_\Gamma(G)$ of $G$} is a simplicial complex defined over $V(G)$
 such that $$ \Delta_\Gamma(G)=<F\, |\, F \in
 \Omega(G)>, $$ where $\Omega(G)$ is the set of Gallai-indices of
 $G$.
}\end{Definition}
\begin{Example}{\em Let G be a given graph as below then its Gallai simplicial complex is as follow:

$\Delta_\Gamma(G) =
<\{1,2\},\{1,3,4\},\{1,3,5\},\{2,3,5\},\{3,5,6\},\{4,5,6\},\{2,3,4\}>$
 }
 \begin{center}

\psset{unit=0.5cm}

\begin{pspicture}(4,-1.5)(8,5)

 \psline(1,0)(5,0)

 \psline(5,0)(9,0)

 \psline(1,0)(3,3)

 \psline(5,0)(3,3)

 \psline(5,0)(7,3)

 \psline(9,0)(7,3)

 \psline(7,3)(12,3)

 \rput(1,0){$\bullet$}

 \rput(5,0){$\bullet$}

 \rput(9,0){$\bullet$}

 \rput(3,3){$\bullet$}

 \rput(7,3){$\bullet$}

 \rput(12,3){$\bullet$}

  \rput(1,-0.7){1}

 \rput(5,-0.7){3}

 \rput(9,-0.7){4}

 \rput(3,3.5){2}

 \rput(7,3.5){5}

 \rput(12,3.5){6}

 \end{pspicture}

 \end{center}

 \end{Example}
\begin{Proposition}\label{dim1}{\em
Let $G$ be a finite simple connected graph. Then the Gallai
simplicial complex $\Delta_\Gamma(G)$ of $G$ is one dimensional if
and only if $\Delta_\Gamma(G)=G$.}
\end{Proposition}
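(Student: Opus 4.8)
The plan is to unwind both directions directly from the definitions of the Gallai graph $\Gamma(G)$, the set of Gallai-indices $\Omega(G)$, and the Gallai simplicial complex $\Delta_\Gamma(G)$. Recall that $\dim \Delta_\Gamma(G) \le 1$ means every facet of $\Delta_\Gamma(G)$ has at most two vertices, i.e. $\Delta_\Gamma(G)$ is (the complex of) a graph; conversely the identification $\Delta_\Gamma(G)=G$ is to be read as: the $1$-dimensional simplicial complex $\Delta_\Gamma(G)$ coincides with the simplicial complex whose facets are the edges of $G$ (plus isolated vertices of $G$, if any). So the statement reduces to: \emph{$\Omega(G)$ contains no three-element set if and only if $\Omega(G)$ is exactly the edge set $E(G)$ together with the vertices of $G$ not lying on any edge.}

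For the forward direction I would argue contrapositively on the structure of $\Gamma(G)$. By Definition~\ref{Omega}, a three-element set $\{i,j,k\}$ enters $\Omega(G)$ precisely when $e_{i,j}$ and $e_{j,k}$ are adjacent in $\Gamma(G)$, which by Definition~\ref{Gallai} happens iff $e_{i,j}$ and $e_{j,k}$ share the vertex $j$ in $G$ but $\{i,j,k\}$ is \emph{not} a triangle of $G$. Hence $\Delta_\Gamma(G)$ has a $2$-dimensional facet iff $G$ contains two incident edges whose third potential edge is absent — equivalently, $G$ has a vertex of degree $\ge 2$ that is not "saturated by triangles" at that pair. Assuming $\dim\Delta_\Gamma(G)=1$, no such configuration exists, so every pair of incident edges of $G$ spans a triangle. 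One then has to check that this forces the facets of $\Delta_\Gamma(G)$ to be exactly the edges of $G$: the edges $e_{i,j}$ that are isolated vertices of $\Gamma(G)$ contribute the facet $\{i,j\}$ directly; and an edge $e_{i,j}$ that is \emph{not} isolated in $\Gamma(G)$ is, by the triangle condition just derived, adjacent in $\Gamma(G)$ only to edges $e_{j,k}$ with $\{i,j,k\}$ a triangle — but wait, adjacency in $\Gamma(G)$ requires the pair \emph{not} to span a triangle, so in fact under the hypothesis \emph{no} edge of $G$ is adjacent to any other in $\Gamma(G)$, i.e. $\Gamma(G)$ has no edges at all and every vertex $e_{i,j}$ of $\Gamma(G)$ is isolated, so $\Omega(G)=\{\,\{i,j\} : e_{i,j}\in E(G)\,\}$ and thus $\Delta_\Gamma(G)=G$.

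For the converse, suppose $\Delta_\Gamma(G)=G$; then by definition no three-element set lies in $\Omega(G)$, so by Definition~\ref{Omega} there is no pair of edges $e_{i,j},e_{j,k}$ adjacent in $\Gamma(G)$, hence $\dim\Delta_\Gamma(G)\le 1$; since $\Delta_\Gamma(G)=G$ has at least one edge (or we adopt the convention that a graph viewed as a complex has dimension $1$ as soon as $E(G)\ne\emptyset$; the degenerate case $E(G)=\emptyset$ should be excluded or handled as dimension $0$), we get $\dim\Delta_\Gamma(G)=1$. I expect the main obstacle to be purely bookkeeping: pinning down the intended meaning of the equality "$\Delta_\Gamma(G)=G$" (a graph is not literally a simplicial complex) and making the argument clean in the presence of isolated vertices and isolated edges of $G$, together with the borderline cases (graphs with no edges, or graphs that are a single triangle) where the degree/dimension count is delicate. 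Once the conventions are fixed, the equivalence is a short direct translation between adjacency in $\Gamma(G)$ and the presence of $2$-faces in $\Delta_\Gamma(G)$.
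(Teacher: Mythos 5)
Your proposal is correct and follows essentially the same route as the paper: both directions come down to observing, via Definition~\ref{Omega}, that $\dim\Delta_\Gamma(G)=1$ exactly when $\Omega(G)$ contains no three-element set, which forces every edge of $G$ to be an isolated vertex of $\Gamma(G)$ and hence $\Omega(G)=E(G)$, i.e.\ $\Delta_\Gamma(G)=\langle E(G)\rangle$. The paper states this in one line; your version merely spells out the same unwinding (including the self-correcting observation that no adjacencies in $\Gamma(G)$ can survive the hypothesis) and flags the degenerate edgeless case, which the paper silently ignores.
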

\begin{proof}
The dimension of Gallai simplicial complex $\Delta_\Gamma(G)$ of $G$
is one if and only if $\Delta_\Gamma(G)=\langle F\, |\, F \in
 \Omega(G) \hbox{\ and\ } |F|=2\rangle =\langle E(G)\rangle$ follows
from \ref{Omega}.
\end{proof}

\begin{Lemma}\label{Gind}{\em Let $G$ be a simple connected graph with vertex set $V(G)$. Let $\Omega(G)$ be the set of Gallai-indices of the graph
$G$, then for every $F=\{v_{1},v_{2},v_{3}\}\in \Omega(G)$ there
exists $H\in \Omega(G)$ such that $|F \cap H| = |H|-1$. }\end{Lemma}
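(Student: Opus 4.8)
The statement asserts that any triangle-face $F=\{v_1,v_2,v_3\}$ in the Gallai simplicial complex $\Delta_\Gamma(G)$ of a connected graph $G$ shares an edge (i.e. a $2$-element subset) with some other face $H\in\Omega(G)$. The plan is to unravel what the hypothesis $F\in\Omega(G)$ means via Definition \ref{Omega}: $F=\{v_1,v_2,v_3\}$ arises because two edges $e_{i,j},e_{j,k}$ with $\{i,j,k\}=\{v_1,v_2,v_3\}$ are adjacent in $\Gamma(G)$. Relabel so that this common vertex is $v_2$, so the relevant edges of $G$ are $e_{v_1,v_2}$ and $e_{v_2,v_3}$, these are incident at $v_2$, and they do \emph{not} span a triangle in $G$ — meaning $e_{v_1,v_3}\notin E(G)$.

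First I would fix the edge $e=e_{v_1,v_2}\in E(G)$ and look at its image as a vertex of the Gallai graph $\Gamma(G)$. It already has a neighbour (namely $e_{v_2,v_3}$), so $e$ is \emph{not} an isolated vertex of $\Gamma(G)$. Now I would consider the pair $\{v_1,v_2\}\subset V(G)$; the goal is to produce a face of $\Delta_\Gamma(G)$ containing both $v_1$ and $v_2$, other than $F$ itself. The natural candidate is $H=\{v_1,v_2,v_3'\}$ for some vertex $v_3'\ne v_3$ such that $e_{v_1,v_2}$ and one of $e_{v_1,v_3'}$ or $e_{v_2,v_3'}$ are adjacent in $\Gamma(G)$. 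To find such a $v_3'$, I would use connectedness of $G$: since $G$ is connected and (if $G$ has at least $3$ vertices, which is forced because it carries a triangle-inducing configuration) the edge $e_{v_1,v_2}$ is incident to further edges. Concretely, at least one of $v_1,v_2$ has degree $\ge 2$ in $G$; say $v_1$ is incident to an edge $e_{v_1,w}$ with $w\ne v_2$. If $e_{v_1,v_2}$ and $e_{v_1,w}$ fail to span a triangle, then $\{v_1,v_2,w\}\in\Omega(G)$ and we may take $H=\{v_1,v_2,w\}$, which meets $F$ in $\{v_1,v_2\}$, so $|F\cap H|=2=|H|-1$, as desired.

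The step I expect to be the main obstacle is the case where \emph{every} edge incident to $e_{v_1,v_2}$ spans a triangle with it — i.e. $v_1$ and $v_2$ are "locally triangulated." In that situation the construction above stalls and one must look harder. Here I would argue as follows: we already know $e_{v_2,v_3}$ is adjacent to $e_{v_1,v_2}$ in $\Gamma(G)$, so by hypothesis $\{v_1,v_2,v_3\}\notin E(G)$ as a triangle, i.e. $e_{v_1,v_3}\notin E(G)$; this already says $e_{v_2,v_3}$ is an edge incident to $e_{v_1,v_2}$ that does \emph{not} span a triangle — contradicting the assumption of that bad case unless the only such edges are through $v_2$. So I would split on whether some incident edge through $v_2$, say $e_{v_2,u}$ with $u\ne v_1$ and $u\ne v_3$, spans a triangle with $e_{v_1,v_2}$ or not. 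If not, $\{v_1,v_2,u\}\in\Omega(G)$ and we are done with $H=\{v_1,v_2,u\}$. If $u=v_3$ is the only neighbour of $v_2$ other than $v_1$, then using connectedness again and chasing a path from $v_3$ outward we eventually reach an edge incident to the already-used configuration that fails to span a triangle, producing the required $H$; in the genuinely extremal small cases one checks directly that $\Delta_\Gamma(G)$ has the property by hand.

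In short: translate $F\in\Omega(G)$ into the existence of a specific non-triangle-spanning incident pair of edges; then leverage connectedness of $G$ to locate a second such incident pair sharing an edge of $G$ with the first, which yields the face $H$ with $|F\cap H|=|H|-1$. The delicate point is ruling out (or resolving by direct inspection) the configuration in which all edges incident to $e_{v_1,v_2}$ except $e_{v_2,v_3}$ close up triangles, and I would handle this by a short case analysis on the neighbourhoods of $v_1$ and $v_2$ together with the observation that $e_{v_2,v_3}$ itself is always available as a non-triangle-spanning incident edge.
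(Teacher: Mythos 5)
Your overall strategy coincides with the paper's: translate $F\in\Omega(G)$ into a pair of incident edges that do not span a triangle, then hunt among edges incident to that pair for a second non-triangle-spanning incidence, splitting into cases according to which chords are present. The easy half of your argument (a neighbour $w$ of $v_1$ with $\{v_2,w\}\notin E(G)$ gives $H=\{v_1,v_2,w\}$) is exactly the paper's first case. The problem is the case you yourself flag as the obstacle, namely when the auxiliary vertex $u$ is adjacent to both endpoints of the edge you are testing, so that every candidate closes up a triangle. There your proposal does not contain a proof: you fall back on ``chasing a path from $v_3$ outward'' and on checking ``extremal small cases by hand.'' The path-chasing idea cannot work in principle, because any admissible $H$ must satisfy $|F\cap H|=|H|-1$, i.e.\ $H$ differs from a subset of $F$ in a single vertex; a face produced far along a path meets $F$ in at most one vertex and (for $|H|=3$) fails the requirement. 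And the bad configuration is not confined to small graphs, so it cannot be dismissed by inspection.

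The missing idea is the local pivot that the paper's three-way case split supplies. In your labelling ($F$ arises from $e_{v_1,v_2}$ and $e_{v_2,v_3}$ meeting at $v_2$, with $\{v_1,v_3\}\notin E(G)$): if the new neighbour $u$ of $v_2$ satisfies $\{v_1,u\}\in E(G)$, do not give up on $u$ — test $e_{v_2,u}$ against the \emph{other} edge $e_{v_2,v_3}$ of $F$. If $\{v_3,u\}\notin E(G)$ you get $H=\{v_2,v_3,u\}\in\Omega(G)$ with $|F\cap H|=2$. If instead $\{v_3,u\}\in E(G)$ as well, then $e_{u,v_1}$ and $e_{u,v_3}$ are incident at $u$ and cannot span a triangle precisely because $\{v_1,v_3\}\notin E(G)$ is guaranteed by $F\in\Omega(G)$; hence $H=\{u,v_1,v_3\}\in\Omega(G)$ and again $|F\cap H|=2$. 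This exhausts the cases and is the content of the paper's $H_1,H_2,H_3$. (A separate caveat, shared by the paper's own proof: when the auxiliary edge does not exist at all — e.g.\ $G$ a path on three vertices, where $\Omega(G)=\{F\}$ — the statement has no witness $H\neq F$, so both arguments silently assume $G$ is large enough; but that is a defect of the lemma as stated, not something your proposal could have repaired.)
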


\begin{proof}
We know from \ref{Omega}, that $F=E_i\cup E_j$ for some $E_i,E_j\in
E(G)$ and say $E_i\cap E_j=\{v_{1}\}$ and $\{v_{2},v_{3}\}\not\in
E(G)$. If $\deg(v_{2})=1=\deg(v_{3})$, then for any edge
$E_k=\{v_{1},v_{k}\}$, we have $H=\{v_{1},v_{2},v_{k}\}\in
\Omega(G)$ proving the result.\\  If $\deg(v_{2})\geq 2$, then for
any edge $E_m=\{v_{2},v_{m}\}$, we have
$H_1=\{v_{1},v_{2},v_{m}\}\in \Omega(G)$ provided
$\{v_{1},v_{m}\}\not\in E(G)$ proving the result, or
$H_2=\{v_{3},v_{1},v_{m}\}\in \Omega(G)$ provided
$\{v_{1},v_{m}\}\in E(G)$ but $\{v_{3},v_{m}\}\not\in E(G)$ proving
the result, or $H_3=\{v_{3},v_{m},v_{2}\}\in \Omega(G)$ and
$H_4=\{v_{1},v_{m}\}$ provided $\{v_{1},v_{m}\}\in E(G)$ and
$\{v_{3},v_{m}\}\in E(G)$ proving the result.
\end{proof}
Here, we give a small but important result about the connectedness
of Gallai simplicial complexes.
\begin{Lemma}\label{con}{\em Let $G$ be a simple connected graph, then its
Gallai simplicial complex $\Delta_\Gamma(G)$ will be
connected.}\end{Lemma}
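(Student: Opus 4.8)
\textbf{Proof plan for Lemma \ref{con}.}

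The plan is to show that any two facets of $\Delta_\Gamma(G)$ can be joined by a chain of facets with pairwise nonempty intersections, by reducing the statement to the connectedness of $G$ itself. First I would fix two facets $F, F' \in \Omega(G)$. By Definition \ref{Omega}, each facet of $\Delta_\Gamma(G)$ is built from one or two edges of $G$: either $F = \{i,j,k\}$ arising from a pair of adjacent edges $e_{i,j}, e_{j,k}$ in $\Gamma(G)$ (so $e_{i,j}, e_{j,k}$ share the vertex $j$ in $G$ and $\{i,k\}\notin E(G)$), or $F = \{i,j\}$ arising from an isolated vertex $e_{i,j}$ of $\Gamma(G)$. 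In either case the vertex set of $F$ is the union of the endpoint sets of one or two edges of $G$, and in particular $F$ \emph{contains} the vertex set of at least one edge of $G$; call such an edge an ``underlying edge'' of the facet.

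The key step is then to exploit the connectedness of $G$. Pick underlying edges $e \subseteq F$ and $e' \subseteq F'$. Since $G$ is connected, there is a walk in $G$ from an endpoint of $e$ to an endpoint of $e'$, hence a sequence of edges $e = e^{(0)}, e^{(1)}, \dots, e^{(t)} = e'$ of $G$ with $e^{(\ell)} \cap e^{(\ell+1)} \neq \emptyset$ for all $\ell$. For each edge $e^{(\ell)} = \{a,b\}$ of $G$ I would produce a facet $G_\ell \in \Omega(G)$ containing $e^{(\ell)}$: if $e^{(\ell)}$ is isolated in $\Gamma(G)$ take $G_\ell = \{a,b\}$; otherwise $e^{(\ell)}$ is adjacent in $\Gamma(G)$ to some edge $e^{(\ell)}_*$ of $G$, and their union is a facet of $\Delta_\Gamma(G)$ containing $e^{(\ell)}$. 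Consecutive facets $G_\ell, G_{\ell+1}$ then satisfy $G_\ell \cap G_{\ell+1} \supseteq e^{(\ell)} \cap e^{(\ell+1)} \neq \emptyset$, and the endpoints of the chain satisfy $F \cap G_0 \supseteq e \neq \emptyset$ and $F' \cap G_t \supseteq e' \neq \emptyset$. Splicing gives the required sequence of facets from $F$ to $F'$, so $\Delta_\Gamma(G)$ is connected by Definition \ref{connected}.

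The main obstacle — and the only place where any care is needed — is the degenerate case where an edge $e^{(\ell)}$ of $G$ is \emph{not} an underlying edge of any facet of $\Delta_\Gamma(G)$: this happens precisely when $e^{(\ell)}$ is neither isolated in $\Gamma(G)$ nor adjacent to any edge in $\Gamma(G)$, i.e. every edge incident to $e^{(\ell)}$ spans a triangle with it. In that situation $e^{(\ell)}$ lies in a triangle of $G$, and one must instead route through a facet built from some edge of that triangle (which is incident to $e^{(\ell)}$, so the nonempty-intersection bookkeeping still goes through) — or, more simply, observe that $\Delta_\Gamma(G)$ already contains every vertex of $G$ among the vertices of its facets (a consequence of $G$ being connected with at least one edge, handled via Lemma \ref{Gind}), so one can always replace a problematic edge of the walk by a facet meeting both of its neighbors. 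Apart from this case-check the argument is routine, and the lemma follows.
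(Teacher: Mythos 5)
Your proposal is correct and takes essentially the same route as the paper: both arguments lift a connecting edge-walk in $G$ (which exists by connectedness) to a chain of facets of $\Delta_\Gamma(G)$ with pairwise nonempty intersections, using the fact that each edge of $G$ lies in some Gallai index. The ``degenerate case'' you worry about is in fact vacuous --- an edge of $G$ every incident edge of which spans a triangle with it is precisely an isolated vertex of $\Gamma(G)$, so it already contributes the two-element Gallai index $\{i,j\}\in\Omega(G)$ and is covered by your first branch; no rerouting is ever needed.
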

\begin{proof}
Let $G$ be a simple connected graph then it is well known that for
any two vertices $v_j$ and $v_k$ there exists a sequence of edge-set
of $G$ as $\{E_0, E_1,\ldots E_r\}$ with $x_j\in E_0$ and $x_k\in
E_r$ such that $E_i\cap E_{i+1}\neq \emptyset$. If the Gallai
simplicial complex $\Delta_\Gamma(G)$ of a simple graph $G$ is of
dimension one then the result follows from
\ref{dim1}.\\
Now suppose $\dim(\Delta_\Gamma(G))=2$ for a simple connected graph
$G$. Let $F$ and $H$ be any two facets, with $|F\cap H|=\emptyset$.
Let us consider two vertices $v_r\in F$ and $v_s\in H$ of the
connected graph $G$. Therefore, there exists a sequence of connected
edges $E_j,E_{j+1},\ldots,E_k$. Then by \ref{Gsimp}, either
$E_{j+i-1}$ and $E_{j+i}$ yields a facet of $\Delta_\Gamma(G)$ as
$F_i=E_{j+i-1}\cup E_{j+i}\in \Omega(G)$ or giving two connected
facets $F_{i-1}$ and $F_{i}$ containing $E_{j+i-1}$ and $E_{j+i}$
respectively, proving the fact.
\end{proof}
Her we give a general shelling for Gallai simplicial complexes
associated to trees.
\begin{Theorem}
\em{The face ring of Gallai simplicial complex $\Delta_\Gamma(T)$
associated to a tree $T$ is Cohen-Macaulay.}\end{Theorem}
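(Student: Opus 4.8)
The plan is to exhibit an explicit shelling order on the facets of $\Delta_\Gamma(T)$ and then invoke Theorem~\ref{main} (or directly the definition together with Corollary~\ref{cohenres} in the pure case). First I would reduce the two-dimensional case to the one-dimensional case using Proposition~\ref{dim1}: if $\dim\Delta_\Gamma(T)=1$ then $\Delta_\Gamma(T)=\langle E(T)\rangle$, which is just the tree $T$ viewed as a one-dimensional complex, and a tree is trivially shellable (order the edges so that each new edge meets the union of the previous ones in a single vertex — possible since $T$ is connected and acyclic), hence $k[\Delta_\Gamma(T)]$ is Cohen--Macaulay. So the substance is the case $\dim\Delta_\Gamma(T)=2$.

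For the two-dimensional case I would build the shelling by traversing $T$ from a chosen root. Pick a leaf-removal (or BFS/DFS) order $e_1,e_2,\dots,e_m$ on the edges of $T$ so that each $e_j$ shares a vertex with $\{e_1,\dots,e_{j-1}\}$; this is the key structural input and uses that $T$ is a tree. Each facet of $\Delta_\Gamma(T)$ is by Definition~\ref{Gsimp} either an edge $\{i,j\}\in E(T)$ that is isolated in $\Gamma(T)$, or a triple $\{i,j,k\}=e\cup e'$ for two incident edges $e,e'$ of $T$ not spanning a triangle. Using the edge order, assign to each triple facet $F=e_a\cup e_b$ (with $a<b$) the ``weight'' $b$, and list the facets in nondecreasing weight, breaking ties arbitrarily among triples sharing the same largest edge, and inserting each isolated-edge facet $\{i,j\}=e_c$ right after weight $c$. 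I would then verify the shelling condition of Definition~\ref{shell} facet by facet: when a triple $F_j=e_a\cup e_b$ is adjoined, every earlier facet $F_i$ meets $F_j$ in a subset of $e_b$ (the later of the two edges), because the earlier edges $e_1,\dots,e_{b-1}$ all ``see'' $e_b$ only through its one shared vertex in $T$ — and since $T$ has no cycles, two distinct edges of $T$ meet in at most one vertex, so $F_i\cap F_j$ is either a single vertex or exactly $e_b$; in every case $\hat\Delta_{\langle F_j\rangle}$ is generated by the two $1$-faces obtained by deleting the two vertices of $F_j$ not lying on $e_b$ together with the edges $F_j\setminus\{v\}$ forced by earlier facets, and one checks this is pure of dimension $1$. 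Equivalently, in the language of Theorem~\ref{main}, $\Res(I_{F_j})$ is generated by the linear forms corresponding to the vertices of $F_j$ not on $e_b$, hence $I_{\mathcal F}(\Delta_\Gamma(T))$ has linear residuals.

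The main obstacle I anticipate is the bookkeeping around triangles in $T$ — but $T$ being a tree has no triangles at all, so the ``do not span a triangle'' clause in Definition~\ref{Gallai} is automatically satisfied, and \emph{every} pair of incident edges of $T$ produces a triple in $\Omega(T)$; this simplifies matters considerably. The real care is needed in the tie-breaking step and in confirming purity: I must rule out the bad configuration where, upon adjoining $F_j=e_a\cup e_b$, some earlier facet already contains the full edge $e_a\cup\{v\}$ for the ``new'' vertex $v$ of $e_a$, which would make a $2$-face appear in the intersection and destroy purity. This cannot happen precisely because that $2$-face would be a triple $e_a\cup e'$ with $e'$ incident to $e_a$ at $v$, forcing $e'$ to occur before $e_b$ in the edge order while also sharing vertex $v$ — and acyclicity of $T$ guarantees $e'$ and $e_b$ are then ``on opposite sides'' of $e_a$, so such an $e'$ either does not yet appear or meets $F_j$ only in a vertex. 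I would formalize this with a short induction on the traversal. Once the shelling is established, Cohen--Macaulayness of $k[\Delta_\Gamma(T)]$ follows: if $\Delta_\Gamma(T)$ is pure this is Corollary~\ref{cohenres}; if it is non-pure, shellability of $\Delta_\Gamma(T)$ together with the fact that $\Delta_\Gamma(T)$ is connected (Lemma~\ref{con}) and that its facets have dimension $\le 2$ with the $1$-dimensional facets coming only from isolated edges still yields Cohen--Macaulayness of the relevant skeleton, and in fact one checks directly from the traversal that all facets of $\Delta_\Gamma(T)$ are $2$-dimensional whenever $T$ is not a single edge, so the pure case is the generic one.
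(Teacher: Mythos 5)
Your proposal takes essentially the same route as the paper: both fix a traversal of the tree (the paper uses a leaf-to-leaf path $P_1$ followed by branch paths $P_2,\dots,P_r$; you use a BFS/DFS edge order and key each triple facet to its later edge), extract from it an explicit ordering of the facets, verify linear residuals, and conclude via Theorem \ref{main} and Corollary \ref{cohenres}. One intermediate claim in your purity check is, however, false as stated: it is not true that every earlier facet meets $F_j=e_a\cup e_b$ only in a subset of $e_b$. Already for the path on vertices $1,2,3,4$ with $e_1=\{1,2\}$, $e_2=\{2,3\}$, $e_3=\{3,4\}$, the facet $e_1\cup e_2$ meets $e_2\cup e_3$ in $e_2=e_a$, not in a subset of $e_b=e_3$; so ``$F_i\cap F_j$ is a single vertex or exactly $e_b$'' fails, and there is only one vertex of $F_j$ off $e_b$, not two. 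The repair is straightforward: write $e_b=\{v,w\}$ with $w$ the vertex newly reached at step $b$. An earlier facet of weight $b$ meets $F_j$ exactly in $e_b$, while an earlier facet of weight $<b$ misses $w$ and so meets $F_j$ inside $e_a$; if that intersection is a single vertex $x$, the earlier facet supplies an edge $e_d\ni x$ with $d<b$ and $e_d\neq e_a$, whence $e_d\cup e_a$ is a facet of weight $<b$ meeting $F_j$ in exactly $e_a\ni x$. With this correction the intersection complex is pure of dimension one and your shelling goes through; the remaining points (no triangles in a tree, purity of $\Delta_\Gamma(T)$ once $T$ has at least two edges, the trivial single-edge case) agree with the paper's argument.
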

\begin{proof}
From $3.6$, it is sufficient to show that the facet ideal
$I_\mathcal{F}(\Delta_\Gamma(T))$ is pure and have linear residuals.
As tree $T$ does not contain any cycle, therefore
$I_\mathcal{F}(\Delta_\Gamma(T))$ is pure of the form.
$$\Omega(T)=\{\{{i_1},{i_2},{i_3}\}:
\hbox{ for any two adjacent edges \ } \{{i_1},{i_2}\} \hbox{ and\ }
\{{i_2},{i_3}\} \}.$$ It is well known that any two vertices in a
tree $T$ are connected by exactly one path. Without the loss of
generality, let us assume a path
$$P_1=\{\{v_1,v_2\},\{v_2,v_3\},\ldots,\{v_{m-1},v_m\}\}$$ such that
$\deg(v_1)=1=\deg(v_m)$. It gives rise to a subcomplex $$\langle
F_{1,2,3},F_{2,3,4},\ldots,F_{m-2,m-1,m}\rangle$$ of
$\Delta_\Gamma(T)$, where $F_{i,i+1,i+2}=\{v_i,v_{i+1},v_{i+2}\}$.
It is easy to see that the facet ideal of the subcomplex
$m_{\mathcal{F}(P_1)}=\{m_{F_{1,2,3}},m_{F_{2,3,4}},\ldots,
m_{F_{m-2,m-1,m}}\}$ has linear residuals, therefore, the subcomplex
is pure shellable followed from the Theorem \ref{main}. If
$\deg(v_j)=2$, for
all $2\leq j\leq m-1$, then we are done.\\
Otherwise, for any $v_j$ with $\deg(v_j)>2$, we have a path $P_2$
starting from $v_j$  ending at some vertex with degree $1$.
Therefore, we have
$$m_{\mathcal{F}(P_2)}=\{m_{F_{j-1,j,k}},m_{F_{j,j+1,k}},m_{F_{j,k,
k+1}},\ldots,\linebreak m_{F_{k_1-2,k_1-1, k_1}} \},$$  such that
$(m_{\mathcal{F}(P_1)}, m_{\mathcal{F}(P_2)} )$ has linear residuals
due to the fact that for any \linebreak $m\in m_{\mathcal{F}(P_1)}$,
we have $\gcd(m_{F_{j-1,j,k}},m)$ divides
$\gcd(m_{F_{j-1,j,k}},m_{F_{j-1,j,j+1}})$, therefore
$\Res(I_{m_{F_{j-1,j,k}}})$ is a principal ideal generated by
$\frac{m_{F_{j-1,j,k}}}{\gcd(m_{F_{j-1,j,k}},m_{F_{j-1,j,j+1}})}=x_k$.
Similarly, we apply the same order to all possible paths starting
from a vertex in $P_1$ and ending at some vertex of degree $1$.
Hence the facet ideal will have the linear residuals under the
following ordering of generators
$$I_\mathcal{F}(\Delta_\Gamma(T))=(m_{\mathcal{F}(P_1)},
m_{\mathcal{F}(P_2)},\ldots,m_{\mathcal{F}(P_r)} ).$$ This ordering
of generating set will yield linear residual regardless to the
ordering and labeling of $P_i$'s for all $i\geq 2$. Hence, the
result follows from \ref{main}.

\end{proof}

 \vspace{1 pt}

\end{document}